\DeclareMathAlphabet{\mathcal}{OMS}{cmsy}{m}{n}
\DeclareSymbolFont{largesymbols}{OMX}{cmex}{m}{n}
\newtheorem{thm}{Theorem}[section]
\newtheorem{lem}[thm]{Lemma}
\newtheorem{prop}[thm]{Proposition}
\newtheorem{cor}[thm]{Corollary}
\theoremstyle{definition}
\newtheorem{exam}[thm]{Example}
\newtheorem{rem}[thm]{Remark}
\newtheorem{subsec}[thm]{}
\newcommand{\defvariable}[2][]{
\if\relax\detokenize{#1}\relax  
   \if\relax\detokenize{#2}\relax
    \else
    ({#2})
    \fi
\else
  ^{{#1}}({#2})
\fi
 }
\newcommand{\lra}{\longrightarrow}
\newcommand{\ra}{\rightarrow}
\newcommand{\add}{{\rm add\, }}
\newcommand{\Hom}{{\rm Hom \, }}
\newcommand{\End}{{\rm End\, }}
\newcommand{\D}[1]{{\mathscr D}(#1)}
\newcommand{\Db}[1]{{\mathscr D}^b(#1)}
\newcommand{\K}[1]{{\mathscr K}(#1)}
\newcommand{\HH}[1]{{\mathscr H}(#1)}
\newcommand{\Kb}[1]{{\mathscr K}^b(#1)}
\newcommand{\Modc}{\ensuremath{\mbox{{\rm -Mod}}}}
\newcommand{\modc}{\ensuremath{\mbox{{\rm -mod}}}}
\newcommand{\op}{^{\rm op}}
\newcommand{\otimesL}{\otimes^{\rm\bf L}}
\newcommand{\Gr}{\textrm{-}\mathrm{Gr}}
\newcommand{\Mod}{\textrm{-}\mathrm{Mod}}
\newcommand{\gr}{\textrm{-}\mathrm{gr}}
\newcommand{\perf}{\textrm{-}\mathrm{perf}}
\newcommand{\Grperf}{\textrm{-}\mathrm{grperf}}
\begin{document}
\sloppy

{\Large \bf \begin{center}  Tilting complexes for group graded self-injective algebras
 \end{center}}
\medskip\medskip

\centerline{{\sc Andrei Marcus$^{a}$} and {\sc Shengyong Pan$^{ b,*}$}}

\begin{center}
Faculty of Mathematics and Computer Science, \\
Babe\c{s}-Bolyai University, \\
Cluj-Napoca, Romania\\
E-mail: marcus@math.ubbcluj.ro
\end{center}

\begin{center} $^b$ Department of Mathematics,\\
 Beijing Jiaotong University,  Beijing 100044,\\
People's Republic of China\\
E-mail: shypan@bjtu.edu.cn
\end{center}

\renewcommand{\thefootnote}{\alph{footnote}}
\setcounter{footnote}{-1} \footnote{ $^*$ Corresponding author. Email: shypan@bjtu.edu.cn}

\renewcommand{\thefootnote}{\alph{footnote}}
\setcounter{footnote}{-1} \footnote{2000 Mathematics Subject
Classification: 18E30,16G10;16S10,18G15.}
\renewcommand{\thefootnote}{\alph{footnote}}
\setcounter{footnote}{-1} \footnote{Keywords: self-injective algebra, graded derived equivalence, strongly graded algebra.}

\medskip
\begin{abstract}
We construct derived equivalences between group graded self-injective algebras, starting from equivalences between their $1$-components, obtained via a construction of J. Rickard and S. Al-Nofayee. \end{abstract}

\section{Introduction}

Construction of tilting complexes for group graded algebras was primarily motivated by the problem of finding reduction methods for Brou\'e's Abelian Defect Group Conjecture. In \cite{M3} two-sided tilting complexes are discussed, while in \cite{M1}, Okuyamas's work promted the need for one-sided group graded tilting complexes. The paper \cite{M2} starts from a method, due to Rickard \cite{R1}, to lift stable equivalences to derived equivalences by characterizing objects that correspond to simple modules. The context in \cite{R1} is that of symmetric algebras. This result has been generalized to self-injective algebras by Al-Nofayee \cite{Al}, and then further extended by Rickard and Rouquier \cite{RR1}.

In this  paper we obtain group graded derived equivalences between self-injective algebras starting from the main results of \cite{Al} and \cite{RR1}. Thus we generalize here the main result of \cite{M2}, and for this, we rely on the properties of the Nakayama functor in the group graded setting. In  Section \ref{s:prelim} we recall the definition and characterization  of group graded tilting complexes, and we point out the it is no need to assume the finiteness of the group $G$. Our main results in Section  \ref{s:tilting} are group graded versions of \cite[Theorem 4]{Al} and of \cite[Theorem 3.9]{RR1}. One of the applications in Section \ref{s:appl} is the combination of these results with Okuyama's strategy to lift stable equivalences. Another application is related to a construction of tilting complexes by Abe and Hoshino \cite{AH}.

In this paper, rings are associative with identity, and modules are left, unless otherwise specified. We denote by $A\Mod$ the category of left $A$-modules, and by $A\modc$ its full category consisting of finitely generated $A$-modules. If $X$ is an object of an additive category $\mathscr{A}$, $\mathrm{add}(X)$ denotes the full subcategory of $\mathscr{A}$ whose objects are direct summands of finite direct sums of copies of $X$. The notations  $\mathscr{H}(\mathscr{A})$ and $\mathscr{D}(\mathscr{A})$ stand for the (unbounded) homotopy, respectively derived category of an abelian category $\mathscr{A}$.  We freely use basic facts from \cite{M3}, \cite{M1}, \cite{M2}.

\section{Preliminaries} \label{s:prelim}

Let $k$ be a commutative ring and $G$ be a group (not necessarily finite). Suppose that $R=\bigoplus_{g\in G}R_g$ and $S=\bigoplus_{g\in G}S_g$ are $G$-graded $k$-algebras such that $R$ is $k$-flat. Throughout we denote by $A=R_1$ and $B=S_1$ the identity components of $R$ and $S$, respectively. We denote by $R\Gr$ the category of $G$-graded $R$-modules, and by $R\gr$ the category of finitely generated $G$-graded $R$-modules. 

\begin{subsec} The group $G$ acts on $G$-graded $R$-modules $M\in R\Gr$ by letting $M(g)=\bigoplus_{h\in G}M(g)_h$ be the $g$-suspension of $M$, where $M(g)_h=M_{hg}$ for all $g,h\in G$. If $R$ is strongly graded, then $G$ acts on $A$-modules $X\in A\Mod$ by conjugation $X\mapsto R_g\otimes_A X$. Note that $(R\otimes_A X)(g)$ is naturally isomorphic to  $R\otimes_A (R_g\otimes_A X)$ in $R\Gr$.
\end{subsec}

\begin{subsec} A $G$-graded $(R,S)$-bimodule $M$ can be regarded as an  $R\otimes S^{\mathrm{op}}$-module graded by the $G\times G$-set $G\times G/\delta(G)$, where $\delta(G)$ is the diagonal subgroup of $G\times G$, with $1$-component $M_1$ a module over the diagonal subalgebra \[\Delta(R\otimes S^{\mathrm{op}}):=(R\otimes S^{\mathrm{op}})_{\delta(G)}=\bigoplus_{g\in G}R_g\otimes S_{g^{-1}}.\] If $R$ and $S$ are strongly graded, then $M$ and $(R\otimes S^{\mathrm{op}})\otimes_{\Delta(R\otimes S^{\mathrm{op}})}M_1$ are naturally isomorphic $G$-graded $(R,S)$-bimodules.
\end{subsec}

\begin{subsec} Recall that an object $\tilde T$ of $\D{R\Gr}$ is called a $G$-{\it graded tilting complex} if it satisfies the following conditions:
\begin{enumerate}
\item[(i)] $\tilde T\in R\Grperf$; this means that, regarded as a complex of $R$-modules,  $\tilde T\in R\perf$, that is $\tilde T$ is bounded, and its terms are finitely generated projective $R$-modules.
\item[(ii)] $\bigoplus_{g\in G}\Hom_{\D{R\Gr}}(\tilde T,\tilde T(g)[n])=0$ for $n\neq0$.
\item[(iii)] $\mathrm{add}\{\tilde T(g)\mid g\in G\}$ generates $R\Grperf$ as a triangulated category.
\end{enumerate}
\end{subsec}

The following result was proved in {\cite[Theorem 2.4]{M1}} and \cite[Theorem 4.7]{M3}, based on Keller's approach \cite{Ke1}, but note that the assumption that $G$ is finite is not needed.

\begin{thm} The following statements are equivalent:

$(1)$ There is a $G$-graded tilting complex $\tilde T\in\D{{R}\Gr}$ and an isomorphism $S\simeq\End_{\D{R}}(\tilde T)^{\mathrm{op}}$ of $G$-graded algebras.

$(2)$ There is a complex $\tilde U$ of $G$-graded $(R,S)$-bimodules such that the functor
\[\tilde U\otimesL_S-: \D{S}\lra \D{R}\]
is an equivalence.

$(3)$ There are equivalences
\[F: \D{R}\lra \D{S} \quad and \quad F^{\mathrm{gr}}: \D{{R}\Gr}\lra \D{{S}\Gr}\]
of triangulated categories such that $F^{gr}$ is $G$-graded functor and the diagram
\[\xymatrix@M=2mm{ \D{R\Gr}\ar[d]^{\mathscr{U}} \ar[r]^{F^{\mathrm{gr}}} & \D{S\Gr}\ar[d]^{\mathscr{U}}  \\  \D{R}\ar[r]^{F} & \D{S},}\]
is commutative.

$(4)$ There are equivalences
\[F_{\mathrm{perf}}: \D{R\perf}\lra \D{S\perf} \quad and \quad F_{\mathrm{perf}}^{\mathrm{gr}}: \D{{R}\Grperf}\lra \D{{S}\Grperf}\]
of triangulated categories such that $F_{\mathrm{perf}}^{\mathrm{gr}}$ is $G$-graded functor and $\mathscr{U}\circ F_{\mathrm{perf}}^{\mathrm{gr}}=F_{\mathrm{perf}}\circ\mathscr{U}$.

{\rm(5) (provided that $R$ and $S$ are strongly graded)}  There are (bounded) complexes $U$ of $\Delta(R\otimes S^{\mathrm{op}})$ modules and  $V$ of $\Delta(S\otimes R^{\mathrm{op}})$-modules, and isomorphisms $U\otimesL_{B} V\simeq A$ in $\mathscr{D}^b(\Delta(R\otimes R^{\mathrm{op}}))$ and $V\otimesL_{A}U\simeq B$ in $\mathscr{D}^b(\Delta(S\otimes S^{\mathrm{op}}))$.
\end{thm}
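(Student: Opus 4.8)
The plan is to run the cycle $(1)\Rightarrow(2)\Rightarrow(3)\Rightarrow(4)\Rightarrow(1)$ and then, assuming strong gradedness, to establish $(2)\Leftrightarrow(5)$ separately; the recurring point to watch is that no step needs $G$ to be finite. The implications among $(2)$, $(3)$, $(4)$ are formal. For $(2)\Rightarrow(3)$, the functor $\tilde U\otimesL_S-$ already lands in $\D R$, and since $\tilde U$ is a complex of $G$-graded bimodules the graded left derived tensor product defines a $G$-graded functor $F^{\mathrm{gr}}$ on $\D{S\Gr}$ for which the square with the (grading-forgetting) functors $\mathscr U$ commutes strictly; full faithfulness and essential surjectivity of $F^{\mathrm{gr}}$ reduce, via $\mathscr U$, to those of $F$ (graded Hom being a direct sum of ungraded Homs of shifts, and graded objects being built from shifts of $R$). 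For $(3)\Rightarrow(4)$, an equivalence of derived categories of module categories restricts to the subcategories of compact objects, which — as $R$ and $S$ are unital and $k$-flat — coincide with the perfect complexes, and the $G$-graded equivalence likewise restricts to $R\Grperf$. For $(4)\Rightarrow(1)$, put $\tilde T:=(F^{\mathrm{gr}}_{\mathrm{perf}})^{-1}(S)$: condition (i) holds because $\mathscr U\circ F^{\mathrm{gr}}_{\mathrm{perf}}=F_{\mathrm{perf}}\circ\mathscr U$, condition (ii) because $\Hom_{\D{R\Gr}}(\tilde T,\tilde T(g)[n])\simeq\Hom_{\D{S\Gr}}(S,S(g)[n])=0$ for $n\neq0$, and condition (iii) is the image of the generation of $S\Grperf$ by $\{S(g)\mid g\in G\}$ under the $G$-graded functor $(F^{\mathrm{gr}}_{\mathrm{perf}})^{-1}$; finally $\End_{\D R}(\tilde T)^{\mathrm{op}}\simeq\End_{\D S}(S)^{\mathrm{op}}=S$, with the induced $G$-grading matching the given one.

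The substantive implication is $(1)\Rightarrow(2)$, for which I would follow Keller's DG-algebra argument \cite{Ke1} in the graded form used in \cite{M1} and \cite{M3}. Choose a DG enhancement of $\D{R\Gr}$ and let $\mathscr E$ be the full DG subcategory on the family $\{\tilde T(g)\mid g\in G\}$; this is a $G$-graded DG category, $G$ acting by the shift functors $(g)$. By (i) the cohomology of $\mathscr E$ is concentrated in degree $0$, and by the hypothesis $S\simeq\End_{\D R}(\tilde T)^{\mathrm{op}}$ the $G$-graded algebra $\bigoplus_{g\in G}H^0\Hom_{\mathscr E}(\tilde T,\tilde T(g))$ is isomorphic to $S^{\mathrm{op}}$. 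Keller's theorem then produces a chain of $G$-graded equivalences $\D{R\Gr}\simeq\D{\mathscr E}\simeq\D{S\Gr}$ — condition (iii) being exactly what makes the first one essentially surjective onto all of $\D{R\Gr}$ — whose composite is given by $\otimesL_S$ with a complex $\tilde U$ of $G$-graded $(R,S)$-bimodules, namely the image of the right regular module. Applying $\mathscr U$, equivalently running the same construction on the ordinary tilting complex $\mathscr U\tilde T$, shows $\tilde U\otimesL_S-\colon\D S\to\D R$ is an equivalence.

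The main obstacle — and the only place the standard treatments invoke $|G|<\infty$ — is checking that the DG computations above involve only finite data. They do: every term of $\tilde T$ is a finitely generated projective $R$-module, hence supported on finitely many homogeneous components, so the morphism complexes of $\mathscr E$ are degreewise finitely generated; the self-orthogonality in (ii) is phrased as a \emph{direct sum} $\bigoplus_{g\in G}\Hom_{\D{R\Gr}}(\tilde T,\tilde T(g)[n])$, never a product, and Keller's bar-resolution and perturbation arguments only ever touch one graded component at a time; and the generation hypothesis (iii) concerns a triangulated subcategory and is insensitive to the cardinality of $G$. Hence the arguments of \cite{M1} and \cite{M3} apply verbatim for arbitrary $G$.

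For $(2)\Leftrightarrow(5)$ under strong gradedness I would invoke Dade's theorem in three guises: $M\mapsto M_1$ gives equivalences $R\Gr\simeq A\Mod$ and $S\Gr\simeq B\Mod$, and, for $(R,S)$-bimodules graded by $G\times G/\delta(G)$, an equivalence with $\Delta(R\otimes S^{\mathrm{op}})$-modules whose quasi-inverse is $N\mapsto(R\otimes S^{\mathrm{op}})\otimes_{\Delta(R\otimes S^{\mathrm{op}})}N$, as recorded in the excerpt. These equivalences are exact and carry $\otimesL_S$ on graded bimodules to $\otimesL_B$ on $1$-components. So, letting $\tilde V$ be a two-sided complex quasi-inverse to $\tilde U$ and setting $U:=\tilde U_1$, $V:=\tilde V_1$, the graded isomorphisms $\tilde U\otimesL_S\tilde V\simeq R$ and $\tilde V\otimesL_R\tilde U\simeq S$ become $U\otimesL_B V\simeq A$ in $\Db{\Delta(R\otimes R^{\mathrm{op}})}$ and $V\otimesL_A U\simeq B$ in $\Db{\Delta(S\otimes S^{\mathrm{op}})}$; conversely, inducing $U$ and $V$ up to graded bimodule complexes and reversing the computation recovers $(2)$.
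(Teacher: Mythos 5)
Your proposal is correct and follows essentially the same route as the paper, which simply observes that the proofs of \cite[Theorem 2.4]{M1} and \cite[Theorem 4.7]{M3} (both based on Keller's DG-category method) carry over verbatim, the only point requiring care for infinite $G$ being the identification $S\simeq\End_{\D{R}}(\tilde T)^{\mathrm{op}}$ as $G$-graded algebras, which holds because $\tilde T$ is bounded with finitely generated terms. One small imprecision: a finitely generated projective graded $R$-module need not be ``supported on finitely many homogeneous components'' when $G$ is infinite; the correct statement is that a morphism out of a finitely generated graded module is a finite sum of homogeneous morphisms, which is exactly the justification the paper gives.
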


\begin{proof} We have an isomorphism $S\simeq\End_{\D{R}}(\tilde T)^{\mathrm{op}}$ of $G$-graded algebras for any group $G$, because $\tilde T$ is bounded, and each component of $\tilde T$ is finitely generated. It follows that the proofs of \cite[Theorem 2.4]{M1}  and \cite[Theorem 4.7]{M3} carry over to the general situation.
\end{proof}

\begin{subsec} A complex $\tilde T\in\HH{R\Gr}$ is called $G$-{\it invariant} if $\tilde T(g)\simeq \tilde  T$ in $\HH{R\Gr}$ for all $g\in G$. More generally, $\tilde T$ is called {\it weakly $G$-invariant} if $\tilde T(g)\in\add(\tilde T)$ in $\HH{R\Gr}$ for all $g\in G$.

If $R$ is strongly $G$-graded, then a complex $T\in\HH{A}$ is called $G$-{\it invariant} if $R_g\otimes_A T\simeq T$ in $\HH{A}$ for all $g\in G$, and $T$ is called {\it weakly $G$-invariant} if $R_g\otimes_A T\in\add(T)$ in $\HH{A}$ for all $g\in G$.

Note that if $R$ is strongly $G$-graded, then the functor $R\otimes_A-: A\Modc\to R\Gr$ is an equivalence, hence a complex $T\in\HH{R\Gr}$ is  $G$-{invariant}  (weakly $G$-invariant) if and only if its identity component $T\in\HH{A}$ is  $G$-{invariant} (weakly $G$-invariant).
\end{subsec}

The following statement is also true for arbitrary $G$. It is essentially proved in \cite[Proposition 2.1 and Remark 2.2]{M2}, but for convenience, we include a proof here.

\begin{prop}\label{mm} Assume that $R$ is strongly $G$-graded. Let $\tilde T$ be a weakly $G$-invariant object in $\Kb{R\Gr}$. Denote by $T$ the identity component of $\tilde T$, and let $S=\End_{\D{R}}({{\tilde T}})^{\mathrm{op}}$.
Then, $T$ is a tilting complex for $A$ if and only if $\tilde T$ is a $G$-graded tilting complex for $R$. Moreover, in this case, $S$ is strongly $G$-graded, and it is a crossed product if and only if $\tilde T$ is $G$-invariant.
\end{prop}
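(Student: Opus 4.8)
The plan is to transport everything, via Dade's theorem, to Rickard's characterisation of tilting complexes over $A$. Since $R$ is strongly $G$-graded, the functor $R\otimes_A-:A\Mod\to R\Gr$ is an equivalence with quasi-inverse $M\mapsto M_1$; it is exact, preserves and reflects the properties "finitely generated" and "projective", and it turns the $g$-suspension on $R\Gr$ into the conjugation functor ${}^g(-):=R_g\otimes_A-$ on $A\Mod$, since $(R\otimes_A X)(g)\cong R\otimes_A(R_g\otimes_A X)$. Hence it identifies $\tilde T$ with $T$ and $\tilde T(g)$ with ${}^gT:=R_g\otimes_A T$, and by the observation preceding the proposition the hypothesis on $\tilde T$ says precisely that ${}^gT\in\add(T)$ in $\Kb A$ for every $g$. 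Passing to derived categories gives an equivalence $R\otimes_A-:\D A\to\D{R\Gr}$ with the same features, which restricts to an equivalence $A\perf\simeq R\Grperf$: a graded $R$-module is finitely generated graded-projective if and only if it is finitely generated projective as an ungraded module, and such modules are exactly the $R\otimes_A P$ with $P$ finitely generated projective over $A$.

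First I would match the three conditions defining a $G$-graded tilting complex with Rickard's three requirements on $T$. In both directions one may assume that $T$ is a bounded complex of finitely generated projective $A$-modules and that $\tilde T=R\otimes_A T$ is the corresponding bounded complex of finitely generated graded-projective $R$-modules --- this holds because $T$ is a tilting complex in the first implication, and because $\tilde T\in R\Grperf$ forces $T\in A\perf$ in the second --- so that maps out of $T$, and out of $\tilde T$, may be computed in the respective homotopy categories. Condition~(i), $\tilde T\in R\Grperf$, then corresponds under the equivalence to $T\in A\perf$. For condition~(ii), the equivalence gives
\[\bigoplus_{g\in G}\Hom_{\D{R\Gr}}(\tilde T,\tilde T(g)[n])\;\cong\;\bigoplus_{g\in G}\Hom_{\D A}(T,{}^gT[n])\]
for each $n$. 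If $T$ is a tilting complex, then since ${}^gT\in\add(T)$ each summand $\Hom_{\D A}(T,{}^gT[n])$ on the right is a direct summand of $\Hom_{\D A}(T,T[n])^{\oplus m}$ for suitable $m$, which is zero for $n\neq0$, so (ii) holds. Conversely the single summand $g=1$ (where ${}^1T=R_1\otimes_A T=T$) shows that (ii) forces $\Hom_{\D A}(T,T[n])=0$ for $n\neq0$. For condition~(iii), the equivalence identifies $\add\{\tilde T(g)\mid g\in G\}$ with $\add\{{}^gT\mid g\in G\}$, which by weak $G$-invariance equals $\add(T)$; hence $\add\{\tilde T(g)\mid g\in G\}$ generates $R\Grperf$ as a triangulated category if and only if $\add(T)$ generates $A\perf$. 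Combining (i)--(iii) with $T\in A\perf$ proves the first assertion.

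For the remaining claims, assume $\tilde T$ is a $G$-graded tilting complex. Since $\tilde T$ is perfect, the forgetful functor $\mathscr U$ gives $\End_{\D R}(\tilde T)\cong\bigoplus_{g\in G}\Hom_{\D{R\Gr}}(\tilde T,\tilde T(g))$ compatibly with composition (cf.\ \cite{M1,M3}), so $S=\End_{\D R}(\tilde T)^{\mathrm{op}}$ becomes a $G$-graded algebra in which $S_g$ is identified with $\Hom_{\D{R\Gr}}(\tilde T,\tilde T(g))$. By the theorem above there is then a $G$-graded equivalence $F^{\mathrm{gr}}:\D{R\Gr}\to\D{S\Gr}$, compatible with the forgetful functors, that sends $\tilde T$ to the free rank-one graded $S$-module (up to a graded shift, harmless below), hence $\tilde T(g)$ to $S(g)$ and $\add(\tilde T)$ to $\add(S)$. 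Therefore $\tilde T$ is weakly $G$-invariant if and only if $S(g)\in\add(S)$ in $S\Gr$ for all $g$, i.e.\ every $S(g)$ is finitely generated graded-projective over $S$; and using the graded splitting maps together with the identifications $\Hom_{S\Gr}(S,S(g))\cong S_g$ and $\Hom_{S\Gr}(S(g),S)\cong S_{g^{-1}}$, one checks directly that this happens exactly when $1\in S_gS_{g^{-1}}$ for all $g$, that is, when $S$ is strongly graded. As $\tilde T$ is weakly $G$-invariant by hypothesis, $S$ is strongly $G$-graded.

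Finally, transporting along $F^{\mathrm{gr}}$ once more, $\tilde T$ is $G$-invariant if and only if $S(g)\cong S$ in $S\Gr$ for all $g$ (for bounded complexes of projectives, isomorphism in $\D{R\Gr}$ coincides with homotopy equivalence in $\Kb{R\Gr}$, so this is precisely the definition of $G$-invariance). A graded isomorphism $S\to S(g)$ is given by $r\mapsto rs$ for some $s\in S(g)_1=S_g$, and it is bijective exactly when $s$ is a unit of $S$; hence $S(g)\cong S$ for all $g$ if and only if every homogeneous component $S_g$ contains a unit of $S$ --- which is the definition of a crossed product. I expect the main obstacle to lie in these last two paragraphs: installing the $G$-grading on $S$, tracking it through $F^{\mathrm{gr}}$, and correctly matching the module-theoretic conditions on $\tilde T$ with the ring-theoretic ones on $S$ (strongly graded, crossed product), including the suspension and unit conventions. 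The first assertion is, in essence, Dade's equivalence applied to Rickard's definition.
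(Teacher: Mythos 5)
Your proposal is correct and follows essentially the same route as the paper: transport everything through the Dade equivalence $R\otimes_A-\colon A\Modc\to R\Gr$, use the decomposition $\Hom_{\D{R}}(\tilde T,\tilde T[n])\simeq\bigoplus_{g\in G}\Hom_{\D{R\Gr}}(\tilde T,\tilde T(g)[n])\simeq\bigoplus_{g\in G}\Hom_{\D{A}}(T,R_g\otimes_AT[n])$ together with weak $G$-invariance for the orthogonality and generation conditions. Your last two paragraphs carefully supply the argument for the claims about $S$ being strongly graded, respectively a crossed product, which the paper dismisses as "clear"; that added detail is consistent with the paper's setup and conventions.
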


\begin{proof} Since $R$ is strongly graded, the functor $R\otimes_A-: A\Modc\to R\Gr$ is an equivalence, and a $G$-graded $R$-module is projective in $R\Gr$ if and only if it is projective in $A\Modc$. It follows that $\tilde T$ is a bounded complex of finitely generated projective $R$-modules if and only if $T$ is a bounded complex of finitely generated projective $A$-modules. For each $m\in \mathbb{Z}$, we have
\[\Hom_{\K{R}}({\tilde T}, \tilde{T}[m])\simeq\bigoplus_{g\in G}\Hom_{\HH{R\Gr}}({\tilde T},{\tilde T}[m](g))\]
and
\[\Hom_{\HH{R\Gr}}({\tilde T}, {\tilde T}[m](g))\simeq \Hom_{\HH{A}}(T, R_g\otimes T[m]).\] Since $\tilde T$ is weakly $G$-invariant, we have that for $m\neq 0$, $\Hom_{\K{A}}(T, R_g\otimes T[m])=0$ if and only if $\Hom_{\K{R\Gr}}({\tilde T},{\tilde T}[m](g))=0$. If $A$ is in the triangulated subcategory generated by $\add(T)$ in $\Db{A}$, then $R$ is in the triangulated subcategory generated by $\add ({\tilde T})$ in $\Db{R\Gr}$. Conversely, if $R$ belongs to the triangulated subcategory generated by $\add (\tilde{T})$ in $\Db{R\Gr}$, then $_AR$ is in the triangulated subcategory generated by $\add (_A\tilde{T})$ in $\Db{A}$. Since $_AR$ is a finite direct sum of copies of $A$, and $_A\tilde{T}$ is a finite direct sum of copies of $T$, $A$ is in the triangulated subcategory generated by $\add (T)$ in $\Db{A}$. The last statement is clear, and also note that for the identity component of $S$ the have the isomorphism \[S_1= \End_{\HH{R\Gr}}(\tilde{T})\simeq \End_{\HH{A}}(T)\op\]
of $k$-algebras.
\end{proof}

\section{$G$-graded self-injective algebras} \label{s:tilting}

In this section we assume that $R$ is a strongly $G$-graded algebra over the field $k$, where $G$ is a finite group, and the identity component $A:=R_1$ is a finite dimensional algebra. For simplicity, we also assume that the field $k$ is algebraically closed, but the results below easily genelatize to arbitrary fields (see \cite[Section 8]{R1}).

\begin{prop}\label{mm-2} Let $T$ be a $G$-invariant object in $\HH{A}$, and denote $\tilde{T}=R\otimes_A T$ and $S=\End_{\D{R}}(\tilde{T})^{op}$.  If $T$ is a tilting complex for $A$ and $A$ is self-injective, then $S$ is a strongly $G$-graded self-injective algebra.
\end{prop}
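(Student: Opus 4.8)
The plan is to reduce, by Proposition~\ref{mm}, to a derived equivalence with $S$ on one side, to observe that the other side is self-injective, and then to transfer self-injectivity across the equivalence using the Nakayama functor.

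First, since $T$ is $G$-invariant, the isomorphism $(R\otimes_A X)(g)\simeq R\otimes_A(R_g\otimes_A X)$ shows that $\tilde T=R\otimes_A T$ is a $G$-invariant, hence weakly $G$-invariant, object of $\Kb{R\Gr}$ with identity component $T$. As $T$ is a tilting complex for $A$, Proposition~\ref{mm} gives that $\tilde T$ is a $G$-graded tilting complex for $R$, that $S$ is strongly $G$-graded, and that $S_1\simeq\End_{\HH A}(T)\op=\End_{\D A}(T)\op$; moreover $\tilde T$ yields a derived equivalence $F\colon\D R\to\D S$ (see Section~\ref{s:prelim}). Thus, once we know that $S$ is self-injective, the assertion follows.

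Now $R$ itself is self-injective: since $G$ is finite and $R$ is strongly $G$-graded, each $R_g$ is an invertible $A$-bimodule, so $R$ is finitely generated and projective on both sides over $A$, and $A\subseteq R$ is a Frobenius extension; hence induction $R\otimes_A-$ coincides with coinduction $\Hom_A(R,-)$, which preserves injectives, so ${}_RR=R\otimes_A{}_AA$ is injective because $A$ is self-injective, and $R$ is self-injective as it is finite dimensional. It remains to see that $S$, being derived equivalent to the self-injective algebra $R$, is self-injective; here the Nakayama functor enters. Since $R$ is self-injective, $DR:=\Hom_k(R,k)$ is projective on both sides, so $\nu_R\simeq DR\otimes_R-$ is an exact auto-equivalence of $\D R$ that is $t$-exact for the standard $t$-structure. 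Because $F$ restricts to an equivalence $R\perf\to S\perf$ and, on perfect complexes, the Nakayama functor is pinned down by the Serre-type duality $\Hom(X,\nu Y)\simeq D\Hom(Y,X)$, one gets $F\circ\nu_R\simeq\nu_S\circ F$ on $R\perf$. Hence $\nu_S$ preserves $S\perf$, so $DS=\nu_S(S)$ has finite projective dimension; applying the same argument over $R\op$ and $S\op$ shows $DS$ has finite projective dimension on both sides, i.e.\ $S$ is Iwanaga--Gorenstein, and moreover $\nu_S\simeq DS\otimesL_S-$ carries $\proj S$ into $S\modc$.

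The remaining step---upgrading ``Gorenstein'' to ``self-injective'', i.e.\ showing that $DS$ is actually projective and not merely of finite projective dimension---is the main obstacle, and is precisely the non-formal content of the fact that self-injectivity is a derived invariant of finite dimensional algebras (\cite{R1} for symmetric algebras, \cite{Al} in general). I would either invoke this result directly, or prove it by combining the $t$-exactness of $\nu_R$ with the above to force $\nu_S$ to be $t$-exact for the standard $t$-structure on $\D S$---equivalently, to identify the invertible bimodule complex inducing $\nu_S$ with a genuine Morita bimodule (twist by an automorphism), which is concentrated in degree zero. Everything else is formal: the reduction to a derived equivalence uses only Proposition~\ref{mm} and the characterisation of graded tilting complexes recalled in Section~\ref{s:prelim}, and the self-injectivity of $R$ is the standard Frobenius-extension argument above.
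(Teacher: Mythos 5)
Your proof follows essentially the same route as the paper: reduce via Proposition~\ref{mm} to the statement that $S$ is strongly $G$-graded and derived equivalent to $R$, note that $R$ is self-injective because $A$ is (the paper simply cites \cite[5.1]{M3}, while you supply the standard Frobenius-extension argument), and then invoke the derived invariance of self-injectivity for finite dimensional algebras (the paper cites \cite[Lemmas 1.7 and 1.8]{AH} and \cite[Corollary 3.12]{RR1}). Your interpolated Nakayama-functor sketch only gets as far as Gorensteinness, as you yourself note, so the final step rests on citing the same known result the paper cites, which is acceptable.
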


\begin{proof} We know by Proposition \ref{mm} that  $\tilde{T}$ is a $G$-graded tilting complex for $R$ and that $S$ is strongly $G$-graded. It is easy to see that $R$ is self-injective if and only if $A$ is self-injective (see, for instance, \cite[5.1]{M3}). Finally, self-injectivity is preserved by derived equivalences by  \cite[Lemmas 1.7 and 1.8]{AH} (see also  \cite[Corollary 3.12]{RR1}).
\end{proof}

Next we extend S. Al-Nofayee's construction \cite{Al} to the case of strongly $G$-graded algebras.

\begin{subsec} \label{s-cond} If there is a derived equivalence between the self-injective $k$-algebras $A$ and $B$, then the set $\mathcal{S}=\{X_i\mid  i\in I\}$ of objects corresponding to the simple $B$-modules, must satisfy the following conditions.

(a) $\Hom(X_i,X_j[m])=0$ for $m<0$.

(b) $\Hom(X_i,X_j)=k$ if $i=j$ and $0$ otherwise.

(c) The objects $X_i$, $i\in I$, generate $\Db{A\modc}$ as a triangulated category.

(d) The Nakayama functor $\nu$ permutes the set $\mathcal{S}$, that is, there is a permutation $\sigma$ on $I$ such that $\nu(X_i)=X_{\sigma(i)}$.

In order to obtain a graded derived equivalence, we need to consider the conjugation action of  $G$ on $A$-modules. Assume that $I$ is a finite $G$-set, and that the objects $X_i$ also satisfy the condition:

{\rm(e)} $R_g\otimes_A X_i\simeq X_{gi}$ for all $g\in G$ and $i\in I$.
\end{subsec}

\begin{lem}\label{mp-2} Let $X_i\in\Db{A\modc}$, $i\in I$, be objects satisfying conditions {\rm \ref{s-cond} (a) to (e)}. There exist bounded complexes  $T_i=I_{\mathcal{S}}(X_i)$ of finitely generated injective modules,  and bounded complexes $T'_i=P_{\mathcal{S}}(X_i)$ of finitely generated projective modules, such that
\[\Hom(T_i, X_j[m])=\begin{cases}
k, &\textrm{if }  i=j\; \textrm{ and}\  m=0  \\
0, &\textrm{otherwise}.
\end{cases},\]
\[\Hom(X_j[m], T'_i)=\begin{cases}
k, &\textrm{if }  i=j\; \textrm{ and}\  m=0  \\
0, &\textrm{otherwise}.
\end{cases},\]
and moreover, \[R_g\otimes_A T_i\simeq T_{gi}, \qquad R_g\otimes_A T'_i\simeq T'_{gi},\] for all $g\in G$ and $i,j\in I$.
\end{lem}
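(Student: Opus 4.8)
The plan is to construct $T_i = I_{\mathcal{S}}(X_i)$ by an iterative "killing" procedure analogous to the one Al-Nofayee uses for symmetric/self-injective algebras in \cite{Al}, but carried out $G$-equivariantly so that the conjugation action of $G$ is respected. First I would set up the building blocks: since $A$ is self-injective, finitely generated injective $A$-modules coincide with finitely generated projective $A$-modules, so a bounded complex of finitely generated injectives is the same as a perfect complex, and the conjugation functor $R_g \otimes_A -$ preserves this class. Starting from $X_i$, I would use conditions \ref{s-cond}(a) and (b) together with self-injectivity to find, for each $i$, a distinguished triangle that replaces $X_i$ by a complex with one fewer nonzero cohomology degree outside degree $0$, choosing at each stage a finitely generated injective module mapping appropriately (this is where injectivity of $A$, hence of the cohomology modules' injective envelopes, is used to realize the required $\Hom$-vanishing). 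Iterating, one arrives after finitely many steps at a bounded complex $T_i$ of finitely generated injectives together with a morphism $T_i \to X_i$ (or $X_i \to T_i$, depending on the direction one sets up) inducing the stated $\Hom$-orthogonality: $\Hom(T_i, X_j[m]) = k$ if $i=j$, $m=0$, and $0$ otherwise. The dual construction, using projective covers instead of injective envelopes, yields $T'_i = P_{\mathcal{S}}(X_i)$ with the mirror-image orthogonality property; this is purely formal once the first construction is done, by passing to $A^{\mathrm{op}}$ or dualizing.

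The key new point is the $G$-equivariance $R_g \otimes_A T_i \simeq T_{gi}$. Here I would exploit condition \ref{s-cond}(e), $R_g \otimes_A X_i \simeq X_{gi}$, and the fact that $R_g \otimes_A -$ is an exact autoequivalence of $A\modc$ (since $R$ is strongly graded) commuting with shifts and sending finitely generated injectives to finitely generated injectives. The cleanest way to get the equivariance is to make the construction of $I_{\mathcal{S}}(X)$ functorial/canonical enough that applying $R_g \otimes_A -$ to the construction for $X_i$ literally produces the construction for $X_{gi}$: the injective envelopes, truncation triangles, and connecting maps used at each stage are all sent by $R_g \otimes_A -$ to the corresponding data for $X_{gi}$, because $R_g \otimes_A -$ is an equivalence. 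Since $I$ is a finite $G$-set and each $X_i$ is bounded with finitely generated cohomology, the whole procedure terminates uniformly, so no compatibility issue arises across the finitely many orbits. One must be mildly careful that "canonical" choices (injective envelope, projective cover) are unique only up to non-unique isomorphism, so strictly speaking one first builds all the $T_i$, then transports the construction along $R_g \otimes_A -$ and observes that the result satisfies the same defining orthogonality property as $T_{gi}$; a uniqueness statement for complexes satisfying \ref{s-cond}-type orthogonality (which follows from (a), (b) and a standard argument) then gives $R_g \otimes_A T_i \simeq T_{gi}$.

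I expect the main obstacle to be precisely this interplay between the inductive, choice-dependent nature of the Al-Nofayee construction and the demand for strict $G$-equivariance: one cannot simply average over $G$, and one must instead either (i) phrase the construction so that it is manifestly compatible with exact autoequivalences, or (ii) prove an auxiliary uniqueness lemma guaranteeing that any two bounded injective (resp.\ projective) complexes with the stated orthogonality to the $X_j$ are isomorphic, and then deduce equivariance from the equivariance of the $X_i$. Approach (ii) seems more robust and is what I would write up: establish uniqueness of $I_{\mathcal{S}}(X)$ and $P_{\mathcal{S}}(X)$ up to isomorphism in $\Kb{A}$ using conditions \ref{s-cond}(a)–(c), then note $R_g \otimes_A I_{\mathcal{S}}(X_i)$ is a bounded injective complex with the correct orthogonality to the $X_{gj} = R_g \otimes_A X_j$, hence isomorphic to $I_{\mathcal{S}}(X_{gi}) = T_{gi}$. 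The existence part itself is a routine (if slightly lengthy) induction on the length of the cohomology support, driven entirely by self-injectivity of $A$ and conditions (a), (b); I would not grind through it in detail beyond indicating the triangle used at each step.
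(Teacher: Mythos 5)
Your proposal is essentially correct, and for the existence part it coincides with the paper: both defer to the inductive construction of Rickard \cite[Section 5]{R1} as adapted in \cite{Al} and \cite[Theorem 2.4]{M1}. (Two small inaccuracies in your sketch of that construction: the objects attached at each stage are shifted copies of the $X_j$ chosen to kill unwanted $\Hom$-groups, not injective envelopes of cohomology modules, and the sequence $X_i^{(0)}\to X_i^{(1)}\to\cdots$ is in general infinite, with $T_i=\mathrm{hocolim}(X_i^{(n)})$ only afterwards shown to be isomorphic to a bounded complex of injectives. Neither affects your argument, since you are citing existence rather than reproving it.)

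Where you genuinely diverge from the paper is in the equivariance statement $R_g\otimes_A T_i\simeq T_{gi}$, which is the actual new content of the lemma. The paper takes your route (i): it runs the inductive construction so that the identification $R_g\otimes_A X_i^{(n)}= X_{gi}^{(n)}$ is maintained at every stage (in effect, one fixes the data on orbit representatives and transports it along the Morita equivalences $R_g\otimes_A-$), and then observes that the homotopy colimit inherits the equivariance. Your preferred route (ii) instead proves that a bounded complex of finitely generated injectives (equivalently, a perfect complex, by self-injectivity) satisfying the stated orthogonality relations with respect to $\{X_j\}$ is unique up to isomorphism in $\Kb{A}$, and then computes $\Hom(R_g\otimes_A T_i, X_j[m])\simeq\Hom(T_i, X_{g^{-1}j}[m])$ using \ref{s-cond}(e) to conclude $R_g\otimes_A T_i\simeq T_{gi}$. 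This is a legitimate and in some ways cleaner argument: it avoids the bookkeeping of compatible choices across a $G$-orbit and the question of whether a homotopy colimit of an equivariant diagram is equivariant, both of which the paper's proof treats rather briskly. The price is that the uniqueness lemma is not quite the one-liner you suggest: vanishing of all $\Hom(U,X_j[m])$ forces $U=0$ by condition (c), but to compare two objects with the same orthogonality one needs a morphism between them, which is most naturally produced via the $t$-structure of \cite[Proposition 3.4]{RR1} (identifying $\nu T_i$ with the injective envelope of $X_i$ in the heart) or via the canonical maps $X_i\to I_{\mathcal{S}}(X_i)$ supplied by the construction. Since the paper itself invokes exactly this $t$-structure machinery in \ref{s:t-structure}, citing \cite{RR1} for the well-definedness of $I_{\mathcal{S}}(X)$ and $P_{\mathcal{S}}(X)$ would make your write-up complete.
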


\begin{proof} The proof given in \cite[Theorem 2.4]{M1} is based on \cite[Section 5]{R1}, and it works for self-injective algebras. For  convenience, we give a brief proof. Let $g\in G$ and $i\in I$. The construction of the complexes $T_i$ go by induction as follows.

Set $X^{(0)}_i:=X_i$, then $R_g\otimes_A X^{(0)}_i=X^{(0)}_{gi}$. By induction on $n$, we shall construct a sequence
\[X^{(0)}_i\to X^{(1)}_i\to X^{(2)}_i\to\cdots  \to X^{(n)}_i\to\cdots\]
of objects and maps in $\Db{A}$.
Assuming that $X^{(n-1)}_i$ and $X^{(n-1)}_{gi}$ are constructed such that $R_g\otimes_A X^{(n-1)}_i=X^{(n-1)}_{gi}$, we may construct that $X^{(n)}_i$ and $X^{(n)}_{gi}$  such that $R_g\otimes_A X^{(n)}_i=X^{(n)}_{gi}$ and we have the commutative diagram
\[\xymatrix{
R_g\otimes_A X^{(n-1)}_i\ar[d]\ar[r] & X^{(n-1)}_{gi} \ar[d]^{\cdot x}\\
R_g\otimes_A X^{(n-1)}_i\ar[r] & X^{(n-1)}_{gi}}.\]
Finally, let $T_i=\mathrm{hocolim}(X_i^{(n)})$, so it follows that $R_g\otimes_A T_i\simeq T_{gi}$.
\end{proof}

\begin{lem}\label{NG-commute} The permutation induced by the Nakayama functor $\nu$ commutes with the conjugation action of $G$ on $A$-modules.
\end{lem}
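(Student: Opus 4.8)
The claim is that the permutation $\sigma$ on $I$ defined by $\nu(X_i) \simeq X_{\sigma(i)}$ (condition \ref{s-cond}(d)) satisfies $\sigma(gi) = g\sigma(i)$ for all $g \in G$ and $i \in I$, equivalently that the conjugation action $X \mapsto R_g \otimes_A X$ and the Nakayama functor $\nu = D\mathrm{Hom}_A(-,A)$ (or equivalently $\nu(-) = DA \otimes_A -$) commute up to natural isomorphism on $\Db{A\modc}$. The plan is to establish a natural isomorphism $R_g \otimes_A \nu(X) \simeq \nu(R_g \otimes_A X)$ and then read off the consequence for the permutation via condition \ref{s-cond}(e): applying both sides to $X = X_i$ gives $X_{g\sigma(i)} \simeq R_g \otimes_A \nu(X_i) \simeq \nu(R_g \otimes_A X_i) \simeq \nu(X_{gi}) \simeq X_{\sigma(gi)}$, and since the $X_j$ are pairwise non-isomorphic (condition \ref{s-cond}(b) forces $\Hom(X_j, X_j) = k \neq 0$ while $\Hom(X_i,X_j)=0$ for $i \neq j$), we conclude $\sigma(gi) = g\sigma(i)$.

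For the natural isomorphism itself, I would work at the level of bimodules. Since $R$ is strongly $G$-graded, each $R_g$ is an invertible $(A,A)$-bimodule: $R_g \otimes_A R_{g^{-1}} \simeq A$ as $(A,A)$-bimodules, with inverse $R_{g^{-1}}$. The Nakayama bimodule is $DA := \mathrm{Hom}_k(A,k)$, viewed as an $(A,A)$-bimodule, and $\nu(-) \simeq DA \otimes_A -$ on the derived category (both $A$ self-injective and the $X_i$ in the bounded derived category of finitely generated modules make this well-behaved). So the statement reduces to a natural isomorphism of $(A,A)$-bimodules
\[
R_g \otimes_A DA \otimes_A R_{g^{-1}} \;\simeq\; DA.
\]
This is where I expect the real content to sit. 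One clean way: the left-hand side is the Nakayama bimodule of $A$ \emph{transported along the automorphism-like conjugation}, and since $R$ itself is self-injective (as noted in the proof of Proposition \ref{mm-2}, via \cite[5.1]{M3}), its Nakayama bimodule $DR$ restricts nicely; concretely, $DR = \mathrm{Hom}_k(R,k)$ is a $G$-graded $(R,R)$-bimodule whose identity component is $DA$, and the $G$-grading forces $R_g \otimes_A (DR)_1 \otimes_A R_{g^{-1}} \simeq (DR)_1$ because conjugation by the invertible bimodule $R_g$ permutes the graded pieces of $DR$ trivially on the diagonal. Alternatively, and perhaps more transparently, one uses that $DA \simeq \mathrm{Hom}_k(A,k)$ and that $R_g \otimes_A -$ commutes with $\mathrm{Hom}_k(-,k)$ up to the duality $R_{g^{-1}} \otimes_A -$ on the other side, a standard adjunction computation for invertible bimodules over a finite-dimensional algebra.

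The main obstacle is making precise the compatibility of the Nakayama functor with the conjugation functor at the derived level for complexes (not just modules), and checking that the isomorphism $R_g \otimes_A \nu(-) \simeq \nu(R_g \otimes_A -)$ is genuinely natural (so that it can be evaluated on the specific objects $X_i$ and combined with \ref{s-cond}(e)). Once the bimodule isomorphism $R_g \otimes_A DA \otimes_A R_{g^{-1}} \simeq DA$ is in hand, naturality is automatic since both functors are then given by tensoring with isomorphic bimodules, and $R_{g^{-1}} \otimes_A X_i \simeq X_{g^{-1}i}$ shows the extra $R_{g^{-1}}$-twist is absorbed correctly. I would also remark that this lemma is exactly what is needed to ensure the induced permutation $\sigma$ is a morphism of $G$-sets, so that the complexes $T_i = I_{\mathcal S}(X_i)$ produced in Lemma \ref{mp-2} assemble into a $G$-graded tilting complex whose endomorphism algebra inherits the strongly $G$-graded structure.
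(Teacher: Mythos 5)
Your proposal is correct and follows essentially the same route as the paper: one establishes a natural isomorphism $\nu(R_g\otimes_A X)\simeq R_g\otimes_A\nu(X)$ --- the paper does this via the adjunction chain $D\Hom_A(R_g\otimes_A X,A)\simeq D\Hom_A(X,R_{g^{-1}})\simeq\Hom_A(R_{g^{-1}},\nu(X))\simeq R_g\otimes_A\nu(X)$, which is exactly your ``standard adjunction computation for invertible bimodules'' --- and then evaluates on $X_i$, using conditions (d), (e) and the pairwise non-isomorphy of the $X_j$ guaranteed by (b) to conclude $\sigma(gi)=g\sigma(i)$. Your alternative justification of the key bimodule isomorphism $R_g\otimes_A DA\otimes_A R_{g^{-1}}\simeq DA$ via the $G$-graded bimodule structure of $DR$ (whose $1$-component $DA$ is preserved by diagonal conjugation) is a valid and arguably cleaner packaging of the same fact, so the substance of the argument coincides with the paper's.
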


\begin{proof} Recall that $\nu=D\Hom_A(-,A)$, where $D=\Hom_k(-,k)$. Since for any $g\in G$, the bimodule $R_g$ induces a Morita auto-equivalence of $A$-mod with quasi-inverse $R_{g^{-1}}$, for any $g\in G$ and $i\in I$ we have
\begin{align*}
\nu (R_g\otimes_A X_i) &\simeq D\Hom_A(R_g\otimes_A X_i,A)\simeq D\Hom_A(X_i,\Hom_A(R_g, A))  \\
                     &\simeq D\Hom_A(X_i,R_{g^{-1}})\simeq D(\Hom_A(X_i, A)\otimes_A R_{g^{-1}}) \\
                     &\simeq \Hom_A(R_{g^{-1}}, D\Hom_A(X_i, A) )\simeq \Hom_A(R_{g^{-1}}, \nu(X_i))   \\
                     &\simeq \Hom_A(R_{g^{-1}}, A)\otimes \nu(X_i))\simeq R_g\otimes_A \nu(X_i) \\
                     &\simeq X_{g\sigma(i)},
\end{align*}
On the other hand,
\[\nu (R_g\otimes_A X_i)\simeq \nu(X_{gi})\simeq X_{\sigma(gi)},\]
so $\sigma(gi)=g\sigma(i)$ for all $g\in G$ and $i\in I$.
\end{proof}

\begin{thm} \label{mp} Let $R$ be a strongly $G$-graded self-injective algebra with $R_1=A$, let $I$ be a finite $G$-set, and let $X_i\in\Db{A\modc}$, $i\in I$, be objects satisfying conditions {\rm \ref{s-cond} (a) to (e)}.

Then there is another self-injective crossed product $G$-algebra $S$, and a $G$-graded derived equivalence between $R$ and $S$, whose restriction to $A$ sends $X_i, i\in I$, to the simple $S_1$-modules.
\end{thm}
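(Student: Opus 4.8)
The strategy is to apply the ungraded construction of Al-Nofayee \cite[Theorem 4]{Al} to the identity component $A$ to produce a tilting complex, and then promote everything to the graded level using Proposition \ref{mm} and Proposition \ref{mm-2}, with the $G$-equivariance built in by Lemma \ref{mp-2} and Lemma \ref{NG-commute}. Concretely, I would first invoke Lemma \ref{mp-2} to obtain, from the objects $X_i$, bounded complexes of finitely generated projective $A$-modules $T'_i = P_{\mathcal S}(X_i)$ with $\Hom(X_j[m], T'_i) = k$ for $i=j$, $m=0$, and $0$ otherwise, together with the equivariance $R_g \otimes_A T'_i \simeq T'_{gi}$. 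Set $T = \bigoplus_{i\in I} T'_i$. Al-Nofayee's theorem says precisely that $T$ is a tilting complex for $A$: conditions \ref{s-cond}(a)--(c) give that the $X_i$ generate $\Db{A\modc}$ with the right $\Hom$-vanishing, self-injectivity of $A$ together with \ref{s-cond}(d) supplies the Nakayama-periodicity needed so that the $\Hom$-orthogonality of the $T'_i$ in nonzero degrees holds, and the generation statement transfers from the $X_i$ to the $T'_i$. So I would cite \cite[Theorem 4]{Al} (or its self-injective form) for the fact that $\End_{\Db A}(T)\op$ is a self-injective algebra having the $X_i$ as its simple modules under the derived equivalence $\Db{\End(T)\op} \to \Db A$.

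\textbf{Passing to the graded setting.} Since $I$ is a finite $G$-set and $R_g\otimes_A T'_i \simeq T'_{gi}$, the complex $T = \bigoplus_i T'_i$ satisfies $R_g \otimes_A T \simeq T$ in $\HH A$ for every $g\in G$, i.e.\ $T$ is $G$-invariant in the sense of \ref{mm-2}. Hence Proposition \ref{mm-2} applies: with $\tilde T = R\otimes_A T$ and $S = \End_{\D R}(\tilde T)\op$, we conclude that $\tilde T$ is a $G$-graded tilting complex for $R$, that $S$ is strongly $G$-graded and self-injective, and — because $T$ is genuinely $G$-invariant, not merely weakly so — that $S$ is a crossed product. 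By Theorem (statement $(3)$ of the first numbered theorem) the graded tilting complex $\tilde T$ produces a $G$-graded derived equivalence $F^{\mathrm{gr}}\colon \D{R\Gr}\to\D{S\Gr}$ fitting into the commutative square with the ungraded equivalence $F\colon \D R\to\D S$ and the forgetful functors $\mathscr U$. Restricting to identity components, $F$ restricts to the ungraded derived equivalence $\Db{S_1\modc}\to\Db{A\modc}$ (note $S_1 = \End_{\HH A}(T)\op$ by the last isomorphism in Proposition \ref{mm}), which by Al-Nofayee's theorem sends the simple $S_1$-modules to the $X_i$; equivalently its inverse sends $X_i$ to the simples.

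\textbf{The equivariance check.} The one point that needs care, and which I expect to be the main (though not deep) obstacle, is verifying that the derived equivalence coming out of \ref{mm-2} really does match up the $G$-action on $A$-modules $X\mapsto R_g\otimes_A X$ with the corresponding action on $S_1$-modules, so that ``simple $S_1$-module'' is sent to ``$X_i$'' compatibly with the $G$-set structure on $I$. This is exactly where Lemma \ref{NG-commute} enters: it guarantees that the permutation $\sigma$ induced by the Nakayama functor satisfies $\sigma(gi) = g\sigma(i)$, so the Nakayama-orbit data used in Al-Nofayee's construction is $G$-equivariant, and the indexing of the simple $S_1$-modules by $I$ is a genuine isomorphism of $G$-sets. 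Combined with the equivariance $R_g\otimes_A T'_i\simeq T'_{gi}$ from Lemma \ref{mp-2}, this makes $S_1$ a $G$-algebra whose simples are permuted by $G$ exactly as the $X_i$ are, and then the crossed product structure of $S$ realizes this conjugation action internally. I would finish by assembling these pieces: $S$ is the desired self-injective crossed product $G$-algebra, and $F^{\mathrm{gr}}$ (or its quasi-inverse) is the required $G$-graded derived equivalence whose restriction to $A$ carries $X_i$ to the simple $S_1$-modules.
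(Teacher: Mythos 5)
Your proposal is correct and follows essentially the same route as the paper: build the tilting complex $T=\bigoplus_{i\in I}T_i$ from Al-Nofayee's construction, use Lemma \ref{mp-2} to arrange $R_g\otimes_A T_i\simeq T_{gi}$ so that $T$ is $G$-invariant, use Lemma \ref{NG-commute} for compatibility of the Nakayama permutation with the $G$-action, and then apply Propositions \ref{mm} and \ref{mm-2} to obtain the self-injective crossed product $S$ and the $G$-graded derived equivalence. The only (immaterial) difference is that you work with the projective complexes $P_{\mathcal S}(X_i)$ while the paper phrases the orthogonality via the complexes $T_i$ and the permutation $\sigma$; since $A$ is self-injective these are interchangeable.
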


\begin{proof}  By  \cite[Lemma 5]{Al}, there is a tilting complex $T=\bigoplus_{i\in I}T_i$ for $A$ such that
\[\Hom(T_i, X_j[m])=\begin{cases}
k, &\textrm{if }  \sigma(i)=j\; \textrm{ and}\  m=0,  \\
0, &\textrm{otherwise}.
\end{cases}\]
It follows by Lemma \ref{NG-commute} and by the definition of the homotopy colimit that $\nu(T_i)\simeq T_{\sigma(i)}$ for all $i\in I$ (see \cite[Lemma 9]{Al}). By Lemma \ref{mp-2}, the summands $T_i$ can be constructed to satisfy the additional condition
\[R_g\otimes_A T_i\simeq T_{gi}.\]
for all $i\in I$ and $g\in G$. Consequently, $T$ is $G$-invariant, and Proposition \ref{mm} applies
\end{proof}

\begin{subsec}  \label{s:t-structure}


Let $\mathcal{T}=\mathcal{D}^b(A)$, and let $(\mathcal{T}^{\le 0}, \mathcal{T}^{>0})$ be the bounded $t$-structure on $\mathcal{T}$ as in \cite[Proposition 3.4]{RR1}. Denote by $\mathcal{A}$ the heart of this $t$-structure, and by ${}^tH^0$ the $H^0$-functor associated to this $t$-structure. Then the set of the simple objects of $\mathcal{A}$ is $\mathcal{S}$.  Let $T_i=I_{\mathcal{S}}(X_i)$ and $T'_i=P_{\mathcal{S}}(X_i)$, $i\in I$, be the complexes defined in Lemma \ref{mp-2}, and let $T'=\bigoplus_{i\in I}T'_i$.

Consider the finite dimensional $G$-graded DG algebra (see \cite[2.3]{M1})
\[S = \End^\bullet_R(R\otimes_A \bigoplus_{i\in I} P_{\mathcal{S}}(X_i))\] 
with $1$-component
\[B = \End^\bullet_A( \bigoplus_{i\in I} P_{\mathcal{S}}(X_i)) = \bigoplus_m \Hom_A( \bigoplus_{i\in I} P_{\mathcal{S}}(X_i), \bigoplus_{i\in I} P_{\mathcal{S}}(X_i)[m]).\]
\end{subsec}

We may now extend \cite[Theorem 3.9]{RR1} to strongly $G$-graded algebras.

\begin{thm} We have:

{\rm1)} $H^m(S) =0$ for $m> 0$ and for $m\ll 0$.

{\rm2)} There is a $G$-graded derived equivalence  $\Db {S\modc} \simeq \Db {R\modc}$.

{\rm3)} There is a $G$-equivalence $H^0(B)\textrm{-}\mathrm{mod} \simeq \mathcal{A}$.
\end{thm}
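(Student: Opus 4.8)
The plan is to deduce the three statements from the ungraded results \cite[Proposition 3.4 and Theorem 3.9]{RR1} applied to the self-injective algebra $A$, and then to lift them to the $G$-graded level exactly as in the proof of Theorem \ref{mp}. The key --- and essentially only --- new observation is that the complex $T' = \bigoplus_{i\in I}P_{\mathcal{S}}(X_i)$ is $G$-invariant: by Lemma \ref{mp-2} we have $R_g\otimes_A T'_i\simeq T'_{gi}$ in $\HH{A}$ for all $g\in G$ and $i\in I$, and since $I$ is a $G$-set the map $i\mapsto gi$ permutes $I$, so $R_g\otimes_A T'\simeq\bigoplus_{i\in I}T'_{gi}\simeq T'$ in $\HH{A}$; likewise $T=\bigoplus_{i\in I}T_i$ is $G$-invariant. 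Consequently, in the $G$-graded DG-algebra formalism of \cite[2.3]{M1}, which is the DG counterpart of Proposition \ref{mm}, the object $\tilde T':=R\otimes_A T'$ is a $G$-graded DG object with identity component $T'$, its DG endomorphism algebra is the strongly $G$-graded DG algebra $S$ with identity component $B$, and the derived equivalences induced by $\tilde T'$ are $G$-graded functors.

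Before treating the three items I would also check that the bounded $t$-structure $(\mathcal{T}^{\le 0},\mathcal{T}^{>0})$ of \cite[Proposition 3.4]{RR1} is stable under the conjugation action of $G$: for each $g\in G$ the functor $R_g\otimes_A-$ is a triangulated self-equivalence of $\mathcal{T}=\Db{A}$ which, by \ref{s-cond}(e), permutes the finite set $\mathcal{S}$ of simple objects of the heart $\mathcal{A}$ up to isomorphism; since $\mathcal{A}$ is of finite length and the $t$-structure is recovered from $\mathcal{S}$, it follows that $R_g\otimes_A-$ preserves $(\mathcal{T}^{\le 0},\mathcal{T}^{>0})$ and hence restricts to an exact self-equivalence of $\mathcal{A}$; this is the conjugation action of $G$ on $\mathcal{A}$ that appears in 3).

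For 1), I would use the decomposition $H^m(S)\simeq\bigoplus_{g\in G}\Hom_{\HH{A}}(T',R_g\otimes_A T'[m])$, obtained exactly as in the proof of Proposition \ref{mm}; since $R_g\otimes_A T'\simeq T'$, this equals $\bigoplus_{g\in G}H^m(B)$. As $T'$ and $R_g\otimes_A T'$ are bounded complexes of finitely generated projective $A$-modules, these $\Hom$-groups vanish for $|m|\gg 0$, and $H^m(B)=0$ for $m>0$ by \cite[Theorem 3.9]{RR1}; this gives 1), and in particular $H^0(S)$ is an ordinary strongly $G$-graded algebra with identity component $H^0(B)$. For 2), since $T'$ is a $G$-invariant one-sided DG tilting object for $A$, the object $\tilde T'$ is a $G$-graded DG tilting object for $R$ with DG endomorphism algebra $S$, so the functor $\tilde T'\otimesL_S-:\D{S}\to\D{R}$ is a $G$-graded derived equivalence restricting to a $G$-graded equivalence $\Db{S\modc}\simeq\Db{R\modc}$; by construction it is compatible with the conjugation actions of $G$ and restricts over the identity components to the ungraded equivalence $\Db{B}\simeq\Db{A}$ of \cite[Theorem 3.9]{RR1}. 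For 3), the ungraded equivalence $H^0(B)\modc\simeq\mathcal{A}$ of \cite[Theorem 3.9]{RR1} is realized by a functor built from $T'$ (essentially ${}^tH^0$ applied to $-\otimesL_{H^0(B)}T'$), and because $\tilde T'$ is $G$-invariant this functor intertwines $R_g\otimes_A-$ on $\mathcal{A}$ with the conjugation action on $H^0(B)\modc$, so it is a $G$-equivalence.

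I expect the only real difficulty to be DG-level bookkeeping, not any new idea: one has to set up the notions of a $G$-graded DG tilting object and of a $G$-graded derived equivalence for DG algebras carefully, following \cite[2.3]{M1}, and then verify that the coherence isomorphisms involved --- chiefly the identifications $R_g\otimes_A T'\simeq T'$ and the unit and counit of the DG equivalence --- can be chosen compatibly with the $G$-action, so that the functors obtained are genuinely $G$-graded rather than merely $G$-equivariant up to isomorphism. All the ungraded input is supplied by \cite[Proposition 3.4 and Theorem 3.9]{RR1}, and the passage to the $G$-graded setting runs in exact parallel with the proof of Theorem \ref{mp}.
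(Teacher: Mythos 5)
Your proposal is correct and follows essentially the same route as the paper: deduce everything from \cite[Theorem 3.9]{RR1} for the identity component $B$, use the $G$-invariance of $T'=\bigoplus_{i\in I}P_{\mathcal{S}}(X_i)$ supplied by Lemma \ref{mp-2} together with the $G$-set structure on $I$, and transfer via the decomposition $\Hom_R(R\otimes_A T',R\otimes_A T'[m])\simeq\bigoplus_{g\in G}\Hom_A(T',R_g\otimes_A T'[m])$ and the equivalence $R\otimes_A-\colon A\Modc\to R\Gr$. Your explicit check that the $t$-structure of \cite[Proposition 3.4]{RR1} is stable under conjugation by $G$ is a welcome detail that the paper leaves implicit in part 3), but it does not constitute a different approach.
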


\begin{proof} 1) By \cite[Theorem 3.9]{RR1}, we have that  $H^m(B) =0$ for $m> 0$ and for $m\ll 0$. Note that
\[\Hom_R(R\otimes_A P_{\mathcal{S}}(X_j), R\otimes_A P_{\mathcal{S}}(X_i)[m])\simeq \Hom_A(P_{\mathcal{S}}(X_j), \bigoplus_{g\in G}R_g\otimes_A P_{\mathcal{S}}(X_i)[m]),\]
and, by Lemma \ref{mp-2}, $\Hom(P_{\mathcal{S}}(X_j), R_g\otimes_A P_{\mathcal{S}}(X_i)[m])=0$ for all $i,j\in I$  if and only if $\Hom(P_{\mathcal{S}}(X_j), P_{\mathcal{S}}(X_i)[m])=0$ for all $i,j\in I$. Consequently, $H^i(S) =0$ for $m> 0$ and for $m\ll 0$.

2) We also know that the functor
\[\Hom^\bullet_A( \bigoplus_{i\in I}P_{\mathcal{S}}(X_i),-): \Db{A}\to \Db{B}\]
is an equivalence. Since $R\otimes_A-: A\Modc\to R\Gr$ is an equivalence, and a $G$-graded $R$-module is projective in  $R\Gr$ if and only if it is projective in $A\Modc$, it is clear that $R_A\otimes_A(\bigoplus_{i\in I} P_{\mathcal{S}}(X_i))$ is perfect object in $\D{R\Gr}$ if and only if $\bigoplus_{i\in I} P_{\mathcal{S}}(X_i)$ is a perfect complex of $A$-modules. Therefore, we get the $G$-graded derived equivalence
\[\Hom^\bullet_R( R\otimes_A\bigoplus_{i\in I} P_{\mathcal{S}}(X_i),-): \Db{R\modc}\to \Db{S\modc}.\]

3) By \cite[Theorem 3.9]{RR1}, ${}^tH^0(T')$ is a progenerator for $\mathcal{A}$ with endomorphism algebra $H^0(B)$. As in Lemma \ref{mp-2}, $T'$ is $G$-invariant, hence  ${}^tH^0(T')$ is also $G$-invariant, and the statement follows.
\end{proof}

\section{Applications and examples} \label{s:appl}

Okuyama's  strategy to lift a stable equivalence to a derived equivalence also generalizes to strongly $G$-graded self-injective algebras. We assume that $k$ is a field, and here we need to assume in addition that the order of $G$ is invertible in $k$, that is,  the characteristic of $k$ does not divide $|G|$.

\begin{cor} Let $R$ and $S$ be strongly $G$-graded self-injective algebras. Assume that $|G|$ is invertible in $k$, and let $M$ be a $G$-graded $R$-$S$-bimodule inducing a stable equivalence of Morita type between $R$ and $S$.

If there are objects $X_i\in\Db{A\modc}$, $i\in I$, be objects satisfying conditions {\rm \ref{s-cond} (a) to (e)}, and such that $X_i$ is stably isomorphic to $M_1\otimes_B S_i$, for all $i\in I$, then there is a $G$-graded derived equivalence between $R$ and $S$.
\end{cor}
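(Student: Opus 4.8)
The plan is to reduce the corollary to Theorem \ref{mp} by showing that, under the hypotheses, the objects $X_i$ automatically satisfy conditions \ref{s-cond} (a)--(e) — which is assumed — and that the stable equivalence of Morita type, being compatible with the grading, forces the $X_i$ to have the right behaviour under the Nakayama functor and under conjugation. Since \ref{s-cond} (a)--(e) are already part of the hypothesis, the real content is to check that the conclusion of Theorem \ref{mp} actually produces a derived equivalence \emph{between $R$ and $S$} (not just between $R$ and some abstract crossed product), and for this one must identify the output algebra $S$ of that theorem with the given $S$. First I would invoke Theorem \ref{mp}: it yields a self-injective crossed product $G$-algebra $S'$ together with a $G$-graded derived equivalence $F\colon \D{R}\to\D{S'}$ whose restriction $F|_A\colon \D{A}\to\D{B'}$ (with $B'=S'_1$) sends each $X_i$ to the simple $B'$-module $S'_i$.

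Next I would bring in the stable equivalence of Morita type given by $M$. Since $|G|$ is invertible in $k$, a stable equivalence of Morita type between the strongly $G$-graded self-injective algebras $R$ and $S$ restricts to a stable equivalence of Morita type between $A=R_1$ and $B=S_1$ via $M_1$ over the diagonal subalgebra, and conversely $M_1\otimes_B-$ is $G$-equivariant for the conjugation actions (this is where invertibility of $|G|$ is used, to split the relevant bimodule sequences $G$-equivariantly). The hypothesis that $X_i$ is stably isomorphic to $M_1\otimes_B S_i$ then says precisely that, under the stable equivalence $\ul{M_1\otimes_B-}\colon \stmodc{B}\to\stmodc{A}$, the images of the simple $B$-modules are the $X_i$. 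By the standard argument of Rickard (see \cite{R1}, \cite{M2}) and Al-Nofayee \cite{Al}, conditions \ref{s-cond} (a)--(d) are exactly what is needed for the stable equivalence to lift to a derived equivalence between $A$ and $B$; combined with condition (e) and the $G$-equivariance just noted, the tilting complex $T=\bigoplus_{i\in I}T_i$ constructed in Theorem \ref{mp} has endomorphism algebra $B$ rather than merely the abstract $B'$. Hence $B'\simeq B$ and, by Proposition \ref{mm}, $S'\simeq S$ as strongly $G$-graded (indeed crossed-product) algebras, so $F$ is a $G$-graded derived equivalence between $R$ and $S$.

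The remaining step is to assemble these pieces into a single diagram: the $G$-graded tilting complex $\tilde T=R\otimes_A T$ for $R$ has $\End_{\D R}(\tilde T)\op\simeq S$ as $G$-graded algebras, and by Theorem 2.1 (the characterization quoted in Section \ref{s:prelim}) this yields the desired equivalences $\D R\to\D S$ and $\D{R\Gr}\to\D{S\Gr}$ commuting with the forgetful functor $\mathscr U$. I would conclude by noting that the restriction to $A$ sends $X_i$ to the simple $S_1$-modules, as in Theorem \ref{mp}, which matches the identification of $S_1$ with $B$ coming from the stable equivalence, so everything is consistent.

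The main obstacle I expect is the $G$-equivariance of the stable-to-derived lifting: one must check that the classical construction (building the $T_i$ as homotopy colimits of the $X_i$ along the $B$-module maps, as in Lemma \ref{mp-2}) can be carried out \emph{simultaneously with} the identification of $\End^\bullet$ with $B$ coming from $M_1$, i.e. that the two a priori different descriptions of the output algebra — one from Theorem \ref{mp} applied to the $X_i$, the other from the Morita-type bimodule $M_1$ — genuinely agree as $G$-graded algebras. This is where the hypothesis ``$X_i$ stably isomorphic to $M_1\otimes_B S_i$'' does its work, and where invertibility of $|G|$ in $k$ is essential, since without it the conjugation action need not be exact on the relevant short exact sequences and the $G$-grading on the output algebra could fail to match. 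Once that compatibility is in hand, the rest is a direct application of Theorem \ref{mp} and Proposition \ref{mm}.
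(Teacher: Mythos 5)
Your first step---apply Theorem \ref{mp} to obtain a self-injective crossed product $S'$ and a $G$-graded derived equivalence $\D{R}\to\D{S'}$ sending the $X_i$ to the simple $S'_1$-modules---agrees with the paper. The gap is in the identification of $S'$ with $S$. You assert that ``the tilting complex $T$ constructed in Theorem \ref{mp} has endomorphism algebra $B$ rather than merely the abstract $B'$'' by appeal to ``the standard argument of Rickard and Al-Nofayee'', but no such identification comes out of the construction: the $T_i$ are built from the $X_i$ alone, so all one knows about $B'=\End(T)\op$ is that its simples correspond to the $X_i$. What the hypothesis ``$X_i$ stably isomorphic to $M_1\otimes_B S_i$'' actually gives, after composing the stable equivalence of Morita type induced by the derived equivalence $R\simeq S'$ (self-injectivity is needed here, via \cite[Remark 3.4]{M1}) with the one induced by $M$, is a stable equivalence of Morita type between $B'$ and $B$ that sends simple modules to simple modules---and nothing more. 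The missing ingredient is Linckelmann's theorem \cite[Theorem 2.1]{Li}: a stable equivalence of Morita type between self-injective algebras sending simples to simples is, up to discarding a projective bimodule summand, a Morita equivalence. That is the step which converts ``matching simples'' into ``Morita equivalent''; note also that it yields a Morita equivalence, not an isomorphism $B'\simeq B$ as you claim (a Morita equivalence is all that is needed, since it composes with the derived equivalence).

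The second issue is where $|G|$ invertible enters. You locate it in exactness of the conjugation action, but that is not its role. In the paper it is used \emph{after} Linckelmann's theorem: the non-projective summand of $M'_1$ (the degree-$1$ component of the graded bimodule inducing the composite stable equivalence between $S'$ and $S$) is a priori only a direct summand as a $B'\otimes B\op$-module, and invertibility of $|G|$ guarantees it can be chosen as a $\Delta(S'\otimes S^{\mathrm{op}})$-summand, so that inducing it up over $S'\otimes S^{\mathrm{op}}$ gives a $G$-\emph{graded} Morita equivalence between $S'$ and $S$. Composing with the $G$-graded derived equivalence $R\simeq S'$ then finishes the proof. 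Without the Linckelmann step and this equivariant splitting your argument does not close.
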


\begin{proof} By Theorem \ref{mp} there is a self-injective crossed product $R'$ and a  $G$-graded derived equivalence between  $R$ and  $R'$ By \cite[Remark 3.4]{M1}, we obtain a  $G$-graded stable equivalence of Morita type between  $R$ and  $R'$. Consequently, we have a stable Morita equivalence between  $R'$ and $S$ induced by a  $G$-graded $R'\otimes S^{\mathrm{op}}$-bimodule $M'$. Since simple $A'$-modules are sent to simple $B$-modules, a theorem of Linckelmann \cite[Theorem 2.1]{Li} says that a direct $A\otimes B^{\mathrm{op}}$-summand $M$ of $M'_1$ induces a Morita equivalence between $A'$ and $B$. Since $|G|$ is invertible in $k$, we have that $M$ is a $\Delta(R'\otimes S^{\mathrm{op}})$-summand of $M'_1$, so by \cite[Theorem 3.4]{M3}, $(R'\otimes S^{\mathrm{op}})\otimes_{\Delta(R'\otimes S^{\mathrm{op}})}M$ induces a $G$-graded Morita equivalence between $R'$ and $S$. By composing this equivalence with the $G$-graded derived equivalence between $R$ and $R'$, we obtain a $G$-graded derived equivalence between $R$ and $S$.
\end{proof}

\begin{rem} By \cite[Section 4]{Al} and \cite[Section 6.3]{R1}, we must have
\[X_i\simeq \Omega^{n_i}(M_1\otimes_B S_i)[n_i].\]
Here we only have to verify condition \ref{s-cond} (e). But this follows immediately since $R_g\otimes_AM\otimes_BS_{g^{-1}}\simeq M$ as $(A,B)$-bimodules, and the syzygy functor $\Omega$ also commutes with the $G$-conjugation functor $R_g\otimes_A-$.
\end{rem}

Another construction of tilting complexes for self-injective algebras is given in \cite[Section 3]{AH} in the case of representation-finite algebras. Here we adapt \cite[Theorem 3.6]{AH} in order to obtain a $G$-invariant tilting complex, so that Proposition \ref{mm} can be applied. Here $G$ is not assumed to be finite.

\begin{prop} Assume that $A$ is representation-finite. Let $P$  be a bounded complex of finitely generated projective $A$-modules such that $\Hom_{\HH{A}}(P, R_g\otimes_A P[m]) = 0$ for all $m\neq 0$ and $g\in G$, and $\add P= \add \nu P$. Then there exists a bounded complex of finitely generated projective $A$-modules $Q$ such that $Q\oplus P$ is a $G$-invariant tilting complex.
\end{prop}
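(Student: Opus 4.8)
The plan is to mimic the construction in \cite[Theorem 3.6]{AH}, but to carry along the $G$-conjugation action at every step so that the resulting tilting complex is $G$-invariant. Recall that the Abe--Hoshino construction, starting from a complex $P$ with $\add P = \add\nu P$ and no self-extensions in the required degrees, iteratively adds indecomposable summands by taking minimal right $\add P$-approximations and forming mapping cones, until one reaches a complex $Q\oplus P$ that generates $\Kb{\proj A}$; the representation-finiteness of $A$ guarantees that this process terminates after finitely many steps. The key extra observation is that the functor $R_g\otimes_A-\colon\HH{A}\to\HH{A}$ is an exact auto-equivalence which commutes with $\nu$ by Lemma \ref{NG-commute} (more precisely, by the computation in its proof, which only used that $R_g$ induces a Morita auto-equivalence), and which therefore sends $\add P$ to $\add P$, sends minimal approximations to minimal approximations, and sends mapping cones to mapping cones.

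First I would set $\mathcal{P} = \add\{R_g\otimes_A P \mid g\in G\} = \add P$ (the hypothesis $\Hom_{\HH{A}}(P,R_g\otimes_A P[m])=0$ for $m\neq 0$ together with representation-finiteness forces $R_g\otimes_A P\in\add P$ for every $g$, since otherwise $P$ would acquire new indecomposable summands under conjugation without bound; alternatively one builds the $G$-stable closure first and checks it still has the no-self-extension property). Then I would run the Abe--Hoshino inductive construction: at stage $n$ one has a complex $Q_n$ (with $Q_0 = 0$) such that $Q_n\oplus P$ has no self-extensions in nonzero degrees and is $G$-invariant; if $Q_n\oplus P$ already generates $\Kb{\proj A}$ we stop, otherwise we pick an indecomposable projective $A$-module $e$ not yet hit, form the universal map / minimal right $\add(Q_n\oplus P)$-approximation, take the cone to get a new summand $C$, and set $Q_{n+1} = Q_n\oplus\bigoplus_{g\in G} (R_g\otimes_A C)$. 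Because $R_g\otimes_A-$ is an auto-equivalence commuting with the relevant structure, each $R_g\otimes_A C$ is again a cone of a minimal approximation (of the conjugated data), so the no-self-extension and $\nu$-stability conditions are preserved, and the construction stays inside the finite set of indecomposable complexes that Abe--Hoshino already control.

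The steps in order: (1) reduce to the case $R_g\otimes_A P\in\add P$ and record that $R_g\otimes_A-$ preserves $\add P$, commutes with $\nu$, with shifts, and with mapping cones; (2) show that each inductive step of \cite[Theorem 3.6]{AH}, performed simultaneously for the $G$-orbit of the newly added summand, preserves the invariants ``$G$-invariant'', ``$\add(\text{current})=\add\nu(\text{current})$'', and ``$\Hom_{\HH{A}}(-,-[m])=0$ for $m\neq 0$''; (3) invoke representation-finiteness (as in \cite{AH}) to see the process terminates with some $Q\oplus P$ generating $\Kb{\proj A}$, hence a tilting complex; (4) conclude that $Q\oplus P$ is $G$-invariant, so that by the remark following Proposition \ref{mm} its image $R\otimes_A(Q\oplus P)$ is a $G$-graded tilting complex for $R$, and Proposition \ref{mm} applies (this last sentence is really the payoff rather than part of the proof of the proposition itself, but it is worth stating).

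The main obstacle I expect is step (2): one must verify that the minimal right $\add(Q_n\oplus P)$-approximation used to define the new summand can be chosen $G$-equivariantly, or at least that forming the cone over the whole $G$-orbit does not destroy the vanishing of self-extensions — a priori, adding $R_g\otimes_A C$ for all $g$ at once could create extensions between $C$ and $R_g\otimes_A C$ for $g\neq 1$. The resolution is that $R_g\otimes_A-$ is an auto-equivalence of $\HH{A}$ carrying the pair $(Q_n\oplus P,\ e)$ to $(Q_n\oplus P,\ R_g\otimes_A e)$, so $R_g\otimes_A C$ is exactly the complex that the Abe--Hoshino recipe would produce when fed $R_g\otimes_A e$; thus all the $R_g\otimes_A C$ are among the complexes Abe--Hoshino build in some order, and their pairwise Hom-vanishing is part of what \cite[Theorem 3.6]{AH} already establishes. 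Making this ``the orbit sits inside the Abe--Hoshino tree'' argument precise — in particular choosing the order of added indecomposables to respect $G$-orbits — is the delicate bookkeeping point, but it is bookkeeping rather than a new idea.
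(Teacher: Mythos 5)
Your overall strategy --- first enlarge $P$ so that it is $G$-invariant, then track $G$-equivariance through the Abe--Hoshino completion, using that $R_g\otimes_A-$ is an auto-equivalence commuting with $\nu$ (Lemma \ref{NG-commute}) --- is the same as the paper's. The difference, and the gap, lies in what you take the Abe--Hoshino construction to be. The proof of \cite[Theorem 3.6]{AH} is not an iterative ``minimal right $\add$-approximation plus mapping cone'' completion. It associates to $P$ the torsion theory $(\mathscr{T},\mathscr{F})$ with $\mathscr{T}={}^\perp H^0(P)$, which is $\nu$-stable because $\add P=\add\nu P$; builds from it the tilting complex of \cite[Lemma 3.4]{AH}, which is determined by the partition of a basic set of local idempotents $\{e_j\}$ according to whether $Ae_j$ lies in $\mathscr{T}$ or in $\mathscr{F}$; and then proceeds through the resulting derived equivalence. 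Because that construction is canonical in the torsion pair and in the idempotent partition, and both are visibly $G$-stable once $P$ is $G$-invariant, the paper obtains $G$-invariance of $T$ and of $Q$ with no orbit-by-orbit bookkeeping at all.

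Your substitute construction has problems that the appeal to \cite{AH} cannot repair, precisely because it is not the construction in \cite{AH}. (i) A cone over a right $\add$-approximation gives $\Hom(C,X[m])=0$ only for $m>0$; the vanishing for $m<0$ (tilting rather than merely silting) and the preservation of the invariant $\add(\mathrm{current})=\add\nu(\mathrm{current})$ are exactly the delicate points for self-injective algebras. You list these as invariants to be preserved at each step but give no reason why forming a cone would preserve them; this is where the $\nu$-stable torsion theory does the real work in \cite{AH}, so ``their pairwise Hom-vanishing is part of what \cite[Theorem 3.6]{AH} already establishes'' is not available to you. (ii) Termination of your iteration is asserted ``as in \cite{AH}'', but there is no such iteration in \cite{AH}, so representation-finiteness would have to be used in a way you have not supplied. (iii) The group $G$ is explicitly not assumed finite here, so $Q_{n+1}=Q_n\oplus\bigoplus_{g\in G}(R_g\otimes_A C)$ need not be a finite direct sum; similarly, your parenthetical argument that $R_g\otimes_A P\in\add P$ is forced (``otherwise $P$ would acquire new summands without bound'') is not a proof. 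The repair is to follow the actual torsion-theoretic construction and simply observe its canonicity under the $G$-action, as the paper does.
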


\begin{proof} By adding $G$-conjugates of $P$, we may assume that $P$ is $G$-invariant. The complex $P$ defines a torsion theory $(\mathscr{T},\mathscr{F})$, where  $\mathscr{T}={}^\perp H^0(P)$ and $\mathscr{F}=\mathscr{T}^\perp$ are invariant under the Nakayama functor $\nu$. Since $P$ is $G$-invariant, we have that $\mathscr{T}$ and $\mathscr{F}$ are also closed under $G$-conjugation. Let $\{e_j\mid j\in J\}$ be a basic set of orthogonal local idempotents in $A$. The $G$-conjugation action of $G$ on $A$-modules induces a $G$-set structure on $J$, such that then the subsets $J_1=\{j\in J\mid Ae_j\in \mathscr{T}\}$ and $J_2=\{j\in J\mid Ae_j\in \mathscr{F}\}$ are $G$-stable. On can easily deduce from these observations and \ref{NG-commute} that the tilting complex $T$ constructed in \cite[Lemma 3.4]{AH} and the complex $Q$ from the proof of \cite[Theorem 3.6]{AH} are $G$-invariant.
\end{proof}

\begin{exam} This example  is related to \cite[Example 9.5]{A}. Let $A$ be a finite dimensional $k$-algebra given by the quiver
\[\xymatrix{     1 \ar[rr]^{\alpha_1} & & 2 \ar[dl]^{\alpha_2} \\
& 3 \ar[ul]^{\alpha_3}               }\]
with relations \[\alpha_1\alpha_2\alpha_3\alpha_1=\alpha_2\alpha_3\alpha_1\alpha_2=\alpha_3\alpha_1\alpha_2\alpha_3=0.\] Let $P=P_2\oplus P_3$. Then $\add P=\add\nu(P)$. Then there is a complex \[Q:=0\ra P_2\ra P_1\ra 0\] with $P_2$ in degree $0$ such that $P\oplus Q$ is a tilting complex for $A$, and the endomorphism algebra  $B$ of $P\oplus Q$ is the algebra given by the quiver
\[\xymatrix{ 1 & & 2 & & 3
\ar@/^/^{\alpha_1}"1,1";"1,3"
\ar@/^/^{\beta_1}"1,3";"1,5"
\ar@/^/^{\alpha_2}"1,3";"1,1"
\ar@/^/^{\beta_2}"1,5";"1,3"     }\]
with relations \[\alpha_1\beta_1=\beta_2\alpha_2=0, \ \alpha_1\alpha_2\alpha_1=\beta_2\beta_1\beta_2=\alpha_2\alpha_1-\beta_1\beta_2=0.\]
Moreover, consider  the infinite cyclic group $G=\langle g, g^{-1}\mid gg^{-1}=g^{-1}g=1\rangle$ acting on $A$ and $B$ as follows: The element $g$ fixes all the vertices and the edge $\alpha_1$ in $A$,  and $g(\alpha_i)=\alpha_i+\alpha_i\alpha_{i+1}\alpha_{i+2}\alpha_i \pmod 3$  for $i\neq 1$,  while $g$ fixes all the vertices and all $\alpha_i$ in $B$, and $g(\beta_i)=\beta_i+\beta_i\beta_{i+1}\beta_i \pmod 2$  for $i\neq 1$.
Then the complexes $P$ and $Q$ are $G$-invariant, and $R\otimes_A(P\oplus Q)$ induces  a $G$-graded derived equivalence between the skew group algebras $R=A*G$ and $S=B*G$.
\end{exam}

\begin{exam} Let $A$ be a finite dimensional $k$-algebra given by the quiver
\[\xymatrix{     1 \ar[r]^{\alpha_1} & 3 & 2 \ar[l]_{\alpha_2}  }\]
(see \cite[1.4]{RR} and \cite[Example (2)]{Mi}). Let $G=\{1,g\}$, with $g$ acting on $A$ by interchanging vertices $1$ and $2$, and consider the skew group algebra $R=A*G$.  Let $T=P_1\oplus P_2\oplus I_3$. Then $R\otimes_A T$ is a $G$-graded tilting complex for $R$.
\end{exam}

\bigskip

\noindent{\bf Acknowledgements.} Shengyong Pan is funded by China Scholarship Council.

The authors thank the referee for his/her observations which improved the presentation of the paper.

{\footnotesize

\end{document}